\newtheorem{theorem}{Theorem}
\newtheorem*{lemma}{Lemma}
\numberwithin{equation}{section}
\newcommand\0{R}
\newcommand\1{r}
\newcommand\2{r_c}
\newcommand\ip[2]{\langle#1,#2\rangle}
\newcommand\4{\Delta}
\newcommand\7{d_a}
\newcommand\8{d_b}
\newcommand\9{d_c}
\renewcommand\v[1]{\mathbf{#1}}
\newcommand\eh{\frac12}
\newcommand\qq[1]{\kern1pt\overline{\kern-1pt#1\kern-0.5pt}\kern0.5pt}
\begin{document}

\baselineskip=17pt

\title[A simple proof of Feuerbach’s Theorem]
{A simple proof of Feuerbach’s Theorem}

\author[F. Hofbauer]{Franz Hofbauer}
\address{Institut f\"ur Mathematik
Universit\"at Wien, Oskar-Morgenstern-Platz 1, 1090 Wien, Austria}
\email{franz.hofbauer@univie.ac.at}

\begin{abstract} The theorem of Feuerbach states that the nine-point circle
of a nonequilateral triangle is tangent to both its incircle and its three excircles. 
We give a simple proof of this theorem.
\end{abstract}

\subjclass{51M04, 97G70}

\maketitle

Let $\triangle ABC$ be a triangle with circumradius $\0$ and sidelengths $a$, $b$ and $c$.
We choose a coordinate system, which has its origin in the circumcenter $O$ of the triangle.
We denote the vectors from $O$ to the vertices $A$, $B$ and $C$ by $\v u$, $\v v$ and $\v w$ respectively.
Then we have $a=\|\v v-\v w\|$, $b=\|\v u-\v w\|$, $c=\|\v u-\v v\|$ and 
$\|\v u\|=\|\v v\|=\|\v w\|=R$.
The semiperimeter $\eh(a+b+c)$ is denoted by $s$.

Let $N$ be the nine-point center and $H$ the orthocenter of the triangle. 
Set $\v n=\eh\v u+\eh\v v+\eh\v w$. Then we have 
$\|\v n-(\eh\v u+\eh\v v)\|=\eh\|\v w\|=\eh\0$. Similarly we get
$\|\v n-(\eh\v u+\eh\v w)\|=\eh\0$ and $\|\v n-(\eh\v v+\eh\v w)\|=\eh\0$.
Therefore $\v n$ is the vector from $O$ to $N$ und $\eh \0$ is the nine-point radius.

Let $I$ be the incenter and $I_c$ the excenter opposite $C$. 
Set $\v m=\frac a{2s}\v u+\frac b{2s}\v v+\frac c{2s}\v w$. 
The equations of the angle bisectors through $A$ and $B$ are
$$ \v u+\lambda(\tfrac1c(\v v-\v u)+\tfrac1b(\v w-\v u)) \ \ \ \text{ und }\ \ \ 
  \v v+\mu(\tfrac1c(\v u-\v v)+\tfrac1a(\v w-\v v)) $$
Choosing $\lambda=\frac{bc}{2s}$ and $\mu=\frac{ac}{2s}$ we get $\v m$ in both cases.
This shows that $\v m$ is the vector from $O$ to the incenter $I$.

Set $\v m_c=\frac a{2(s-c)}\v u+\frac b{2(s-c)}\v v-\frac c{2(s-c)}\v w$. 
The equations of the external angle bisectors through $A$ and $B$ are
$$ \v u+\lambda(\tfrac1c(\v v-\v u)-\tfrac1b(\v w-\v u)) \ \ \ \text{ und }\ \ \ 
  \v v+\mu(\tfrac1c(\v u-\v v)-\tfrac1a(\v w-\v v)) $$
Choosing $\lambda=\frac{bc}{2(s-c)}$ and $\mu=\frac{ac}{2(s-c)}$ we get $\v m_c$ 
in both cases.
This shows that $\v m_c$ is the vector from $O$ to the excenter $I_c$.

We use the following lemma for the computation of distances.

   \begin{lemma}
For any $\7$, $\8$ and $\9$ in $\mathbb R$ we have
$$\|\7\v u+\8\v v+\9\v w\|^2=\0^2(\7+\8+\9)^2-(a^2\8\9+b^2\7\9+c^2\7\8) $$
   \end{lemma}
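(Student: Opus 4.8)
The plan is to expand the squared norm directly by bilinearity of the inner product, and then to eliminate the three pairwise inner products $\ipv uv$, $\ipv uw$ and $\ipv vw$ in favor of the side lengths $a$, $b$, $c$. Since everything is expressed through $\v u$, $\v v$, $\v w$ and their known norms, this is a short computation rather than a conceptual argument.

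First I would write
$$\nn{\7\v u+\8\v v+\9\v w}^2=\7^2\nv u^2+\8^2\nv v^2+\9^2\nv w^2+2\7\8\,\ipv uv+2\7\9\,\ipv uw+2\8\9\,\ipv vw.$$
Because $\nv u=\nv v=\nv w=\0$, the three diagonal terms collapse to $\0^2(\7^2+\8^2+\9^2)$, and it remains to handle the cross terms.

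Next I would convert the inner products into side lengths. From $c=\nn{\v u-\v v}$ I get $c^2=\nv u^2+\nv v^2-2\,\ipv uv=2\0^2-2\,\ipv uv$, hence $2\,\ipv uv=2\0^2-c^2$; the analogous identities give $2\,\ipv uw=2\0^2-b^2$ and $2\,\ipv vw=2\0^2-a^2$. Substituting these into the expansion turns the right-hand side into
$$\0^2(\7^2+\8^2+\9^2)+\7\8(2\0^2-c^2)+\7\9(2\0^2-b^2)+\8\9(2\0^2-a^2).$$

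Finally I would separate the $\0^2$ contributions from the remainder. All the $\0^2$ terms assemble into $\0^2(\7^2+\8^2+\9^2+2\7\8+2\7\9+2\8\9)=\0^2(\7+\8+\9)^2$, while the leftover terms are precisely $-(a^2\8\9+b^2\7\9+c^2\7\8)$, which is the claimed identity. I do not expect a genuine obstacle here; the only step needing care is the bookkeeping that pairs each side length with the correct product—$a$ with $\8\9$, $b$ with $\7\9$, and $c$ with $\7\8$—which is forced by the conventions $a=\nn{\v v-\v w}$, $b=\nn{\v u-\v w}$, $c=\nn{\v u-\v v}$, so that the side opposite a vertex multiplies the product of the two coefficients of the other vertices.
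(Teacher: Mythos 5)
Your proof is correct and follows essentially the same route as the paper: expand the squared norm by bilinearity, use $c^2=\|\v u-\v v\|^2$ (and its analogues) together with $\nv u=\nv v=\nv w=\0$ to express the inner products as $\ipv uv=\0^2-\eh c^2$, and then regroup the $\0^2$ terms into the square $(\7+\8+\9)^2$. The only difference is cosmetic — you substitute the inner products after expanding rather than computing them beforehand — so there is nothing to add.
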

   \begin{proof} We have 
$c^2=\|\v u-\v v\|^2=\ip{\v u-\v v}{\v u-\v v}=\|\v u\|^2+\|\v v\|^2-2\ip{\v u}{\v v}$.
Since we have also $\|\v u\|=\|\v v\|=\0$ we get $\ip{\v u}{\v v}=\0^2-\eh c^2$. 
Similarly  we get $\ip{\v u}{\v w}=\0^2-\eh b^2$ und  $\ip{\v v}{\v w}=\0^2-\eh a^2$.
Using this and $\|\v u\|=\|\v v\|=\|\v w\|=\0$ we compute
\begin{align*}
\|\7\v u+&\8\v v+\9\v w\|^2  \\
&=\7^2\|\v u\|^2+\8^2\|\v v\|^2+\9^2\|\v w\|^2 
                              +2\7\8\ip{\v u}{\v v}+2\7\9\ip{\v u}{\v w}+2\8\9\ip{\v v}{\v w} \\
   &=\0^2(\7^2+\8^2+\9^2+2\7\8+2\7\9+2\8\9)-c^2\7\8-b^2\7\9-a^2\8\9  
\end{align*}
This is the desired result.
   \end{proof}
   
We use Heron's formula $\4^2=s(s-a)(s-b)(s-c)$ for the area $\4$ of the triangle. 
This can also be written as $16\4^2=2a^2b^2+2a^2c^2+2b^2c^2-a^4-b^4-c^4$.
Furthermore, we use the formulas $\1=\frac\4s$, $\2=\frac\4{s-c}$ and $\0=\frac{abc}{4\4}$
for the inradius $\1$, the exradius $\2$ and the circumradius $\0$. 
In particular we have $\1\0=\frac{abc}{4s}$ and $\2\0=\frac{abc}{4(s-c)}$.

   \begin{theorem}\label{eul} (Euler) $|OI|^2=\0^2-2\0\1$   \end{theorem}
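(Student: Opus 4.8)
The plan is to read $|OI|$ straight off the coordinate description of the incenter and then feed it into the Lemma. Since $\v m$ is the vector from $O$ to $I$, we have $|OI|=\nv m$, where $\v m=\frac a{2s}\v u+\frac b{2s}\v v+\frac c{2s}\v w$. I would therefore apply the Lemma with $\7=\frac a{2s}$, $\8=\frac b{2s}$ and $\9=\frac c{2s}$, which turns the geometric claim into a purely algebraic identity in $a$, $b$, $c$, $s$, $\0$ and $\1$.

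The decisive simplification is that $\7+\8+\9=\frac{a+b+c}{2s}=1$, so the first term $\0^2(\7+\8+\9)^2$ of the Lemma collapses to exactly $\0^2$. For the subtracted quadratic form I would pull out the common factor $\frac1{4s^2}$ and use $a+b+c=2s$ a second time:
$$a^2\8\9+b^2\7\9+c^2\7\8=\frac1{4s^2}\bigl(a^2bc+ab^2c+abc^2\bigr)=\frac{abc}{4s^2}(a+b+c)=\frac{abc}{2s}.$$

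Finally I would identify $\frac{abc}{2s}$ as $2\0\1$ by means of the stated relation $\1\0=\frac{abc}{4s}$, which yields $|OI|^2=\0^2-2\0\1$ immediately. There is no genuine obstacle here: the whole argument is a single substitution into the Lemma followed by two elementary applications of $a+b+c=2s$. The only point requiring care is the sign in the Lemma, namely that the quadratic form is subtracted, so that the $2\0\1$ term appears with a minus sign as required.
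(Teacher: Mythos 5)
Your proposal is correct and is essentially identical to the paper's own proof: the same substitution $\7=\frac a{2s}$, $\8=\frac b{2s}$, $\9=\frac c{2s}$ into the Lemma, the same use of $a+b+c=2s$ to get $\7+\8+\9=1$ and $a^2\8\9+b^2\7\9+c^2\7\8=\frac{abc}{2s}$, and the same identification $\frac{abc}{2s}=2\0\1$. Nothing to add.
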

   \begin{proof} We have $\overrightarrow{OI}=\v m=\7\v u+\8\v v+\9\v w$ with
$\7=\frac a{2s}$, $\8=\frac b{2s}$ and $\9=\frac c{2s}$. 
We get $\7+\8+\9=1$ and $a^2\8\9+b^2\7\9+c^2\7\8=\frac{abc}{4s^2}(a+b+c)=\frac{abc}{2s}=2\1\0$.
The Lemma gives the result.
   \end{proof}

   \begin{theorem}\label{1} (Feuerbach) $|IN|=\eh\0-\1$   \end{theorem}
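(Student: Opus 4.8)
The plan is to compute the vector $\vv{IN}=\v n-\v m$ and feed it into the Lemma. Subtracting the coefficients of $\v m$ from those of $\v n$ gives $\vv{IN}=\7\v u+\8\v v+\9\v w$ with
$$ \7=\eh-\tfrac a{2s}=\tfrac{s-a}{2s},\qquad \8=\tfrac{s-b}{2s},\qquad \9=\tfrac{s-c}{2s}. $$
Since $a+b+c=2s$, the coefficients sum to $\7+\8+\9=\frac{(s-a)+(s-b)+(s-c)}{2s}=\frac s{2s}=\eh$, so the first term of the Lemma contributes $\0^2(\7+\8+\9)^2=\frac14\0^2$ and we obtain
$$ \bb{I}{N}^2=\tfrac14\0^2-(a^2\8\9+b^2\7\9+c^2\7\8). $$

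It then suffices to show that the bracketed quadratic form equals $\0\1-\1^2$. Granting this, $\bb{I}{N}^2=\frac14\0^2-\0\1+\1^2=(\eh\0-\1)^2$, and the square root may be extracted with the right sign because Theorem \ref{eul} gives $\bb{O}{I}^2=\0^2-2\0\1\ge0$, whence $\0\ge2\1$ and $\eh\0-\1\ge0$. This yields exactly $\bb{I}{N}=\eh\0-\1$.

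To match the two quantities I would clear denominators on each side. On the geometric side, the stated formula $\0\1=\frac{abc}{4s}$ together with $\1^2=\frac{\4^2}{s^2}=\frac{(s-a)(s-b)(s-c)}{s}$ (Heron) gives $\0\1-\1^2=\frac1{4s^2}\,s\bigl(abc-4(s-a)(s-b)(s-c)\bigr)$. On the algebraic side, $a^2\8\9+b^2\7\9+c^2\7\8=\frac1{4s^2}\bigl(a^2(s-b)(s-c)+b^2(s-a)(s-c)+c^2(s-a)(s-b)\bigr)$. Thus the whole theorem reduces to the single polynomial identity
$$ a^2(s-b)(s-c)+b^2(s-a)(s-c)+c^2(s-a)(s-b)=s\bigl(abc-4(s-a)(s-b)(s-c)\bigr). $$

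Verifying this identity is the one genuinely computational step, and the main obstacle. I expect the neatest route is the substitution $x=s-a$, $y=s-b$, $z=s-c$, so that $s=x+y+z$ and $a=y+z$, $b=z+x$, $c=x+y$. The left-hand side then expands to the symmetric polynomial $x^3(y+z)+y^3(z+x)+z^3(x+y)+2(x^2y^2+y^2z^2+z^2x^2)$, and using $abc=(x+y+z)(xy+yz+zx)-xyz$ the right-hand side expands to the same expression. This confirms the identity and completes the proof.
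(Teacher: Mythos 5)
Your proof is correct and follows essentially the same route as the paper: the same coefficients $\frac{s-a}{2s}$, $\frac{s-b}{2s}$, $\frac{s-c}{2s}$ fed into the Lemma, the same reduction of $|IN|^2$ to $\frac14R^2-Rr+r^2$, and the same appeal to Euler's theorem to extract the square root with the correct sign. The only difference is in verifying the central identity $a^2d_bd_c+b^2d_ad_c+c^2d_ad_b=Rr-r^2$: the paper expands $a^2(a^2-(b-c)^2)+b^2(b^2-(a-c)^2)+c^2(c^2-(a-b)^2)$ and recognizes $-16\Delta^2$ via Heron's formula in its symmetric form, whereas you clear denominators, use Heron in product form, and check the resulting polynomial identity with the substitution $x=s-a$, $y=s-b$, $z=s-c$; your verification is correct, just a slightly longer way to do the same computation.
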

   \begin{proof} We have $\overrightarrow{IN}=\v n-\v m=\7\v u+\8\v v+\9\v w$ with
$\7=\frac{s-a}{2s}$, $\8=\frac{s-b}{2s}$ and $\9=\frac{s-c}{2s}$. 
We get $\7+\8+\9=\eh$ and 
\begin{align*}
a^2\8\9&+b^2\7\9+c^2\7\8  \\
    &=\tfrac{1}{16s^2}(a^2(a^2-(b-c)^2)+b^2(b^2-(a-c)^2)+c^2(c^2-(a-b)^2)) \\
     &=\tfrac{1}{16s^2}(a^4+b^4+c^4-2a^2b^2-2a^2c^2-2b^2c^2  +2abc(a+b+c))  \\
    &=-\tfrac{\4^2}{s^2}+\tfrac{abc}{4s}=-\1^2+\1\0  
\end{align*}
The Lemma implies $|IN|^2=\frac14\0^2+\1^2-\1\0=(\eh\0-\1)^2$. By Theorem~\ref{eul} we have
$\eh\0\ge\1$. Hence  $|IN|=\eh\0-\1$ follows.
   \end{proof}

   \begin{theorem}\label{2} (Feuerbach) $|I_cN|=\eh\0+\2$   \end{theorem}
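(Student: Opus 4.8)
The plan is to follow the proof of Theorem~\ref{1} closely, replacing the incenter $\v m$ by the excenter $\v m_c$. First I would compute the direction vector $\vv{I_cN}=\v n-\v m_c=\7\v u+\8\v v+\9\v w$ by subtracting $\v m_c=\frac a{2(s-c)}\v u+\frac b{2(s-c)}\v v-\frac c{2(s-c)}\v w$ from $\v n=\eh\v u+\eh\v v+\eh\v w$. Using $s-c-a=-(s-b)$ and $s-c-b=-(s-a)$ this gives
$$\7=-\frac{s-b}{2(s-c)},\qquad \8=-\frac{s-a}{2(s-c)},\qquad \9=\frac{s}{2(s-c)}.$$
A one-line check then shows $\7+\8+\9=\frac{a+b-c}{4(s-c)}=\eh$, exactly as before, so the first term of the Lemma again contributes $\0^2(\7+\8+\9)^2=\frac14\0^2$.

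The real work is the quadratic term $a^2\8\9+b^2\7\9+c^2\7\8$, and this is where I expect the only difficulty to lie, since the minus sign on the $\v w$-coefficient of $\v m_c$ flips the signs of two of the three products. Putting everything over the common denominator $4(s-c)^2$ I would obtain
$$a^2\8\9+b^2\7\9+c^2\7\8=\frac{-a^2 s(s-a)-b^2 s(s-b)+c^2(s-a)(s-b)}{4(s-c)^2}.$$
Applying the identities $4s(s-a)=(b+c)^2-a^2$, $4s(s-b)=(a+c)^2-b^2$ and $4(s-a)(s-b)=c^2-(a-b)^2$, the numerator over $16(s-c)^2$ expands to $a^4+b^4+c^4-2a^2b^2-2a^2c^2-2b^2c^2-4abc(s-c)$. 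The symmetric part is $-16\4^2$ by Heron's formula, so the quadratic term collapses to
$$a^2\8\9+b^2\7\9+c^2\7\8=-\frac{\4^2}{(s-c)^2}-\frac{abc}{4(s-c)}=-\2^2-\2\0,$$
where the last equality uses $\2=\frac\4{s-c}$ and $\2\0=\frac{abc}{4(s-c)}$.

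The point to watch is precisely this sign: in the incircle case the cross terms delivered $+\1\0$, producing the difference $\eh\0-\1$, whereas here both pieces are negative and yield $-\2^2-\2\0$. Substituting into the Lemma gives
$$|I_cN|^2=\frac14\0^2-(-\2^2-\2\0)=\frac14\0^2+\2\0+\2^2=(\eh\0+\2)^2,$$
and since $\eh\0$ and $\2$ are both positive the positive square root yields $|I_cN|=\eh\0+\2$. Note that, unlike in Theorem~\ref{1}, no auxiliary inequality is required here, because the right-hand side $\eh\0+\2$ is automatically positive.
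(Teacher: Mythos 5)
Your proof is correct and follows essentially the same route as the paper: the same coefficients $\7,\8,\9$ of $\vv{I_cN}$, the same application of the Lemma, and the same algebraic reduction of $a^2\8\9+b^2\7\9+c^2\7\8$ to $-\2^2-\2\0$ via Heron's formula (your intermediate step through $s(s-a)$, $s(s-b)$, $(s-a)(s-b)$ is just a reorganization of the paper's direct expansion). Your closing remark that no analogue of Euler's inequality is needed here, since $\eh\0+\2>0$ automatically, is also exactly the point implicit in the paper's one-line conclusion.
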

   \begin{proof} We have $\overrightarrow{I_cN}=\v n-\v m_c=\7\v u+\8\v v+\9\v w$ with
$\7=\frac12-\frac a{2(s-c)}=-\frac{s-b}{2(s-c)}$, $\8=\frac12-\frac b{2(s-c)}=-\frac{s-a}{2(s-c)}$ 
and $\9=\frac12+\frac c{2(s-c)}=\frac{s}{2(s-c)}$. 
We get $\7+\8+\9=\eh$ and 
\begin{align*}
a^2\8\9&+b^2\7\9+c^2\7\8 \\
   &=\tfrac{1}{16(s-c)^2}(a^2(a^2-(b+c)^2)+b^2(b^2-(a+c)^2)+c^2(c^2-(a-b)^2)) \\
     &=\tfrac{1}{16(s-c)^2}(a^4+b^4+c^4-2a^2b^2-2a^2c^2-2b^2c^2  -2abc(a+b-c))  \\
    &=-\tfrac{\4^2}{(s-c)^2}-\tfrac{abc}{4(s-c)}=-\2^2-\2\0  
\end{align*}
The Lemma implies $|I_cN|^2=\frac14\0^2+\2^2+\2\0=(\eh\0+\2)^2$. We get $|I_cN|=\eh\0+\2$.
   \end{proof}

Theorem~\ref{1} implies that the nine-point circle is tangent to the incircle and 
Theorem~\ref{2} implies that the nine-point circle is tangent to the excircle
opposite $C$.

We get also

   \begin{theorem}  $|OH|^2=9\0^2-(a^2+b^2+c^2)$   \end{theorem}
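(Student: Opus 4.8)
The plan is to locate the orthocenter first and then feed its coordinates into the Lemma, exactly as was done in the three preceding theorems. Since $N$ is the midpoint of the segment $OH$ and $O$ is the origin, one expects $\vv{OH}=2\v n=\v u+\v v+\v w$. Rather than lean on that midpoint relation, I would verify directly that the point with position vector $\v u+\v v+\v w$ is the orthocenter, i.e.\ that it lies on all three altitudes.

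The altitude through $A$ passes through $\v u$ and is perpendicular to the side $BC$, whose direction is $\v v-\v w$. For the candidate point we have $(\v u+\v v+\v w)-\v u=\v v+\v w$, and
$$\ip{\v v+\v w}{\v v-\v w}=\nv v^2-\nv w^2=\0^2-\0^2=0,$$
where the final step uses $\nv u=\nv v=\nv w=\0$. Hence $\v u+\v v+\v w$ lies on the altitude through $A$, and by the same computation, permuting the roles of $\v u$, $\v v$ and $\v w$, it lies on the altitudes through $B$ and $C$ as well. Therefore $\vv{OH}=\v u+\v v+\v w$.

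With this identification we have $\vv{OH}=\7\v u+\8\v v+\9\v w$ where $\7=\8=\9=1$, so that $\7+\8+\9=3$ and $a^2\8\9+b^2\7\9+c^2\7\8=a^2+b^2+c^2$. The Lemma then gives $\bb OH^2=9\0^2-(a^2+b^2+c^2)$, which is exactly the claimed formula.

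The only substantive step is the identification of the orthocenter; once $\vv{OH}=\v u+\v v+\v w$ is known, the distance computation is immediate from the Lemma. Because the perpendicularity check rests solely on the equal circumradii $\nv u=\nv v=\nv w=\0$, there is no genuine obstacle, and the argument runs parallel to the proofs of Theorems~\ref{eul}, \ref{1} and \ref{2}.
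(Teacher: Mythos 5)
Your proof is correct, and the final computation---applying the Lemma with $\7=\8=\9=1$---is exactly the paper's. Where you differ is in how the identification $\vv{OH}=\v u+\v v+\v w$ is justified. The paper simply cites the classical fact that the nine-point center $N$ is the midpoint of $OH$: since $\v n=\eh(\v u+\v v+\v w)$ was already computed, that midpoint relation gives $\vv{OH}=2\v n$ in one line. You instead verify directly that the point with position vector $\v u+\v v+\v w$ lies on all three altitudes, via $\ip{\v v+\v w}{\v v-\v w}=\nv v^2-\nv w^2=0$ and its two permutations. Your route is slightly longer but self-contained: it uses nothing beyond the definition of the orthocenter and $\nv u=\nv v=\nv w=\0$, whereas the paper's identification imports the Euler-line midpoint property without proof---the only external fact about $H$ and $N$ that the paper invokes, everything else (nine-point center, incenter, excenter) having been verified from scratch. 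In exchange, the paper's version is shorter and makes the role of $N$ transparent. Both are valid; yours arguably strengthens the paper by closing that small gap.
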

   \begin{proof} 
Since $N$ is the midpoint between $O$ and $H$, we get that 
$\v u+\v v+\v w$ is the vector from $O$ to $H$. 
Hence the desired result follows from the Lemma with $\7=\8=\9 =1$.
\end{proof}

Classical proofs of of Feuerbach’s Theorem can be found in \cite{F} and \cite{J}. 
In \cite{S} a proof is given, which uses vector computations.
The proof in this paper is still simpler than those in \cite{S}.

\end{document}